\title{ Integer solutions of $a^2+ab+b^2=7^n$}
\author{Nguyen Tho Tung\\
Department of Mathematics-Mechanics-Informatics\\
Vietnam National University, Hanoi, Viet Nam\\
\\ Le Tien Nam\\
Department of Mathematics-Mechanics-Informatics\\
Vietnam National University, Hanoi, Viet Nam\\}
\date{}
\newtheorem{Pro}{Proposition}
\newtheorem{cor}{Corollary}
\theoremstyle{definition}
\newtheorem{cl}{Claim}
\newtheorem{rem}{Remark}
\DeclareMathOperator{\R}{\mathbb{R}}
\DeclareMathOperator{\Z}{\mathbb{Z}}
\DeclareMathOperator{\N}{\mathbb{N}}
\begin{document}
\maketitle
\begin{abstract}
In this article we will show $2$ different proofs for the fact that there  exist relatively prime positive integers $a,b$ such that: \[a^2+ab+b^2=7^n.\]
\end{abstract}
%\section{Introduction}[by Moshe Rosenfeld]
%The motivation for this problem is construction of large sets of points in $\R^2$, no three on a line such that the distance between any two points is an integer. Note that if $a,b,c$ are the lengths of the sides of a triangle $\Delta ABC$ then if $a^2 +ab + b^2  = c^2$ then $\angle ACB = \frac{\pi}{3}$. This allows us to rotate triangles about their center and construct large sets of points on a circle such that all distances among them are integers.
\section{The first proof-by Nguyen Tho Tung} 
For the first proof, we use the ring of Eisenstein integers to solve the above problem.

\begin{Pro}
For each positive integer $n$, there exists two positive relatively prime integers $a,b$ such that:
\begin{equation}
 a^2+ab+b^2=7^n.  
\end{equation}
\end{Pro}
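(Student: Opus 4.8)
The plan is to realize the quadratic form $a^2+ab+b^2$ as the norm form on the ring of Eisenstein integers and to factor $7^n$ there. Let $\omega=e^{i\pi/3}$ be a primitive sixth root of unity, so that $\omega^2=\omega-1$ and $R:=\mathbb{Z}[\omega]$ is the ring of Eisenstein integers; this is a Euclidean domain, hence a PID and a UFD, and its units are exactly $\pm1,\pm\omega,\pm\omega^2$. For $\alpha=a+b\omega\in R$ one has $N(\alpha)=\alpha\bar\alpha=a^2+ab+b^2$. I would first note that $N(1+2\omega)=1+2+4=7$; since $7$ is a rational prime, $\pi:=1+2\omega$ is a prime element of $R$, and $7=\pi\bar\pi$, where $\pi$ and $\bar\pi$ are non-associate (an associate relation $\bar\pi=u\pi$ would give $7=u\pi^2$, which is impossible by comparing prime factorizations, and in any case is easily excluded by listing the six associates of $\pi$).

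Next, for a fixed $n\ge 1$ write $\pi^n=a+b\omega$ with $a,b\in\mathbb{Z}$; taking norms yields $a^2+ab+b^2=N(\pi)^n=7^n$ at once. Two points remain: coprimality and positivity. For coprimality, suppose a rational prime $p$ divides both $a$ and $b$; then $p\mid\pi^n$ in $R$, so $p^2=N(p)\mid N(\pi^n)=7^n$, forcing $p=7$; but then $7=\pi\bar\pi\mid\pi^n$, hence $\bar\pi\mid\pi^{n-1}$, contradicting unique factorization since $\bar\pi$ is a prime non-associate to $\pi$. Therefore $\gcd(a,b)=1$.

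For positivity I would argue geometrically. Replacing $\pi^n$ by any of its six associates $\omega^k\pi^n=a_k+b_k\omega$ changes neither $\gcd(a_k,b_k)$ nor the value $a_k^2+a_kb_k+b_k^2=7^n$. Viewed in $\mathbb{C}$, these six associates have arguments in arithmetic progression with common difference $60^\circ$; since $1$ and $\omega$ have arguments $0^\circ$ and $60^\circ$, one checks that the pair $(a_k,b_k)$ is strictly positive precisely when $\omega^k\pi^n$ lies in the open sector of arguments $(0^\circ,60^\circ)$. It remains to rule out that some associate sits on the boundary of a $60^\circ$-sector, i.e. is a real multiple of a unit: if $\omega^k\pi^n=um$ with $u$ a unit and $m\in\mathbb{Z}$, then $\pi^n\mid m$ and $\bar\pi^n\mid m$, so $7^n\mid m$, whence $7^{2n}\mid m^2=N(m)=7^n$, impossible for $n\ge 1$. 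Hence exactly one of the six associates lies in the open sector $(0^\circ,60^\circ)$, and the corresponding $(a,b)$ is the desired positive coprime solution.

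I expect the arithmetic to be entirely routine once $R$ is in hand; the only place that needs care is the last step — normalizing signs through the unit group and verifying that none of the six associates degenerates to a pair with a zero coordinate. As an alternative to the geometric normalization one could write down the explicit unimodular substitution corresponding to multiplication by $\omega$ (a $60^\circ$ rotation of the hexagonal lattice) and dispatch positivity by a short case check; one could even avoid the ring entirely and prove the statement by induction via the identity furnished by multiplication by $\pi$, but the Eisenstein-integer argument seems cleanest.
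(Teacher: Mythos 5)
Your proof is correct, and it works in the same ring as the paper's first proof: both realize $a^2+ab+b^2$ as the Eisenstein norm and take the $n$-th power of a prime of norm $7$ (the paper uses $2+\omega$, you use $1+2\omega$, which is an associate of its conjugate -- immaterial). Where you genuinely diverge is in how the two delicate points are handled. For coprimality the paper leaves the ring: it expands $(2+\omega)^n=a_n+b_n\omega$ into the recurrences $a_{n+1}=2a_n-b_n$, $b_{n+1}=a_n+3b_n$, deduces $a_{n+2}=5a_{n+1}-7a_n$, and reads off $7\nmid a_n$ by reducing modulo $7$; you stay inside $\Z[\omega]$ and use that $\pi$ and $\bar\pi$ are non-associate primes, so a common rational prime divisor of $a,b$ would force $\bar\pi\mid\pi^{n-1}$, contradicting unique factorization. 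For positivity the paper patches signs by hand: when the coordinates of $(2+\omega)^n$ have mixed signs it passes to $(b_n-|a_n|,\,|a_n|)$, and takes absolute values when both are negative; you instead normalize through the unit group, noting that $a+b\omega$ has both coordinates positive exactly when its argument lies in the open sector $(0^\circ,60^\circ)$, that the six associates $\omega^k\pi^n$ have arguments spaced by $60^\circ$, and that no associate can sit on a sector boundary because that would force $7^n\mid m$ for an integer $m$ with $N(m)=7^n$. Your route is more structural and makes it transparent that exactly one associate yields a positive pair, at the price of invoking the UFD property and the unit group explicitly; the paper's route is more computational but more elementary, and its recurrences produce the explicit numerical table and feed the corollary that follows. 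Two small spots in your write-up deserve an explicit line rather than a wave: the non-associateness of $\pi$ and $\bar\pi$ should be settled by actually listing the six associates (your remark that $7=u\pi^2$ is ``impossible by comparing prime factorizations'' is circular on its own, since that is exactly the ramification question at issue), and the claim that passing to an associate preserves $\gcd(a_k,b_k)$ is easiest to see by writing out multiplication by $\omega$ as $(a,b)\mapsto(-b,a+b)$. Neither is a gap, just a line each of routine verification.
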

%Before giving the proof, let us recall some notation and some background. 
%\begin{enumerate}
%\item Let $\omega=\frac{1+\sqrt{-3}}{2}$. Consider the Integral Domain  $\Z[\omega] = \{a + b\omega \mid a,b \in \Z\}$. This Integral Domain is a Euclidean Domain with norm given by 
%\[ N(a+b\omega)=(a+b\omega) \overline (a+b\omega)= a^2+ab+b^2. \] 
%where $\sigma$ is the non-trivial automorphism of $Q[\omega]$ defined by 
%\[ \sigma: a+b\omega \mapsto a+b \overline{\omega}. \] 
%As this is a Euclidean Domain, it is a  unique factorization domain. 
%
%\item  Suppose $x$ is an element in $\Z[\omega]$ such that $N(x)$ is a prime number. Then $x$ is a prime element in $\Z [\omega]$. Therefore $2+
%\omega$ and $2+\overline{\omega}$ are prime elements in $\Z[\omega]$ since their norm is $7$. 
%
%\item The set of all units of this number field is the set of all element $a+b\omega$ such that $N(a+b\omega)=1$. Or equivalently 
%\[ a^2+ab+b^2=1.\] 
%Solving this equation we can easily find that the set of all units is $\{\pm 1, \pm \omega, \pm \omega^2 \},  (\omega^2 = \omega - 1)$. 
%\end{enumerate}

%We are now able to prove the above proposition. 
\begin{proof}
Let $\omega=\frac{1+\sqrt{-3}}{2}$. Consider the Integral Domain  $\Z[\omega] = \{a + b\omega \mid a,b \in \Z\}$. This Integral Domain is a Euclidean Domain with the norm given by 
\[ N(a+b\omega)=(a+b\omega) \overline{ (a+b\omega)} = a^2+ab+b^2.\]
\[(a+b\omega)\overline{(a + b\omega)} = (\frac{2a + b + b\sqrt{-3}}{2})(\frac{2a + b - b\sqrt{-3}}{2}) = a^2 + ab + b^2 \] 
We first note that $N(2 + \omega) =7$ and hence $N((2+\omega)^n) = 7^n$.
%The equation $(1)$ can be written as 
%\[ (a+b\omega)(a+b \overline{\omega})=(2+\omega)^n (2+\overline{\omega})^n. \]
%As $2+\omega$ and $2+\overline{\omega}$ are both prime elements, there exists a unit $\eta$ and two numbers $k_1,k_2$ such that 
%\[ \begin{cases} a+b\omega=\eta (2+\omega)^{k_1} (2+\overline{\omega})^{k_2} \\ a+b\overline{\omega}=\overline{\eta} (2+\omega)^{n-k_1}(2+\overline{\omega})^{n-k_2} \end{cases} \]
%As we expect $\gcd(a,b)=1$ $k_1$ and $k_2$ can not be both positive. Otherwise, $7|\gcd(a,b)$. For simplicity we only consider the case 
Our goal is to show that for all $n \in \N$ we can find positive integers $a_n, b_n$ such that:
\[ a_n+b_n \omega =(2+\omega)^n.\] 
Note that this imples that: \[N(a_n + b_n\omega) = a_n^2 + a_nb_n + b_n^2 = N((2+\omega)^n)) = 7^n\]

We have:
\[ a_{n+1}+b_{n+1} \omega=(a_n+b_n\omega)(2+\omega)=(2a_n-b_n)+(a_n+3b_n)\omega .\]
here we use the fact that $\omega^2=\omega-1$. By this formula, we have 
\[ \begin{cases} a_{n+1}=2a_n-b_n \\ b_{n+1}=a_n+3b_n. \end{cases} \] 
with initial conditions:  $a_0=1,b_0=0, a_1=2, b_1=1$. 
\vskip 5pt 
The above relation gives us
\begin{align*}
a_{n+2}&=2a_{n+1}-b_{n+1} =2a_{n+1}-(a_n+3b_n) \\
&=2a_{n+1}-a_n-3(2a_n-a_{n+1})\\
&=5a_{n+1}-7a_n ,\quad \forall n\geq 1. 
\end{align*}
Similarly we also have 
\[ b_{n+2}=5b_{n+1}-7b_n.\] 
Taking the sequence modulo 7 we have 
\[ a_{n+1} \equiv 5a_n \bmod 7, \forall n\geq 1. \] 
As $a_1=2$, from the above relation, we can conclude that $7 \nmid a_n$ for all $n$. In addition, we have
\[ a_n^2+a_nb_n+b_n^2=7^n.\] 
Therefore, $a_n$ and $b_n$ are relatively prime. Otherwise, $7$ would be a divisor of $a_n$. 
\vskip 5pt
It remains to show that we can always find positive integers $a_n, b_n$. In case both are negative then $|a_n|^2 + |a_n||b_n| + |b_n|^2 = 7^n$. If one is negative and the other is positive, since they are relatively prime  we may assume without loss of generality that $b_n > |a_n|$. It is now easy to check that:
\[(b_n - |a_n|)^2 + |a_n|(b_n - |a_n|) + |a_n|^2 = b_n^2 + a_nb_n + a_n^2 = 7^n\]
so we obtain the positive solution \[(b'_n, a'_n) = (b_n - |a_n|, |a_n|).\]

\end{proof}

We will illustrate some numerical data. Let us denote by  $a_n,b_n$  the integer solution of $(1)$ defined above and $(A_n, B_n)$ the positive pair associated with $(a_n,b_n)$.  

\begin{center}
\begin{tabular}{l*{6}{c}r}
n                 & $a_n$ \quad  & $b_n$ \quad   & $A_n$ \quad  & $B_n$\\
\hline
0   			  & 1		& 0     & 1		&  0 				 \\
             
1                & 2		&1		&2		&  1 \\  
2                 & 3		&5		&3		&5   \\
3				  & 1		&18     & 1     &18 \\	
4 				  & -16     &55 	& 39 &16 \\
5				&-87		& 149   & 62 &87 \\ 	
6     			& -323      &360    & 37 & 323 \\ 
\end{tabular}
\end{center}
\begin{cor}
The equation $a^2 + ab + b^2 = 7^{2n}$ has at least $n$ distinct positive solutions $(a,b)$.
\end{cor}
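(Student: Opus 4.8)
The plan is to reuse the construction in the Proposition to manufacture an explicit family of $n$ positive solutions of $a^2+ab+b^2=7^{2n}$, and then distinguish them by their greatest common divisors.

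First I would recall that, by the Proposition, for each integer $m\ge 1$ there is a pair $(A_m,B_m)$ of positive integers with $A_m^2+A_mB_m+B_m^2=7^m$, and this pair is coprime: the proof shows $\gcd(a_m,b_m)=1$, and passing from $(a_m,b_m)$ to the positive pair $(A_m,B_m)$ — whether by taking absolute values or by the substitution $(b_m-|a_m|,|a_m|)$ — does not change the gcd. For $k=0,1,\dots,n-1$ put $m_k:=2(n-k)\in\{2,4,\dots,2n\}$ and define
\[
a^{(k)}:=7^{k}A_{m_k},\qquad b^{(k)}:=7^{k}B_{m_k}.
\]
Then $a^{(k)},b^{(k)}$ are positive integers, and
\[
(a^{(k)})^{2}+a^{(k)}b^{(k)}+(b^{(k)})^{2}=7^{2k}\bigl(A_{m_k}^{2}+A_{m_k}B_{m_k}+B_{m_k}^{2}\bigr)=7^{2k}\cdot 7^{m_k}=7^{2n},
\]
so each $(a^{(k)},b^{(k)})$ solves the equation.

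It remains to see that the $n$ pairs $(a^{(0)},b^{(0)}),\dots,(a^{(n-1)},b^{(n-1)})$ are pairwise distinct. The key point is that $\gcd(a^{(k)},b^{(k)})=7^{k}\gcd(A_{m_k},B_{m_k})=7^{k}$ by the coprimality noted above. Since $7^{0},7^{1},\dots,7^{n-1}$ are pairwise distinct, so are the pairs, which yields the claimed $n$ distinct positive solutions.

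I do not expect any real obstacle: the only care needed is to confirm that the Proposition really supplies a strictly positive, coprime pair $(A_m,B_m)$ for every $m\ge 1$ (in particular for the even indices $m_k\ge 2$ used here), which is precisely the content of its final paragraph. As a remark, one can in fact do better — there are at least $2n$ distinct positive solutions — by working in $\Z[\omega]$: the elements of norm $7^{2n}$ are exactly the associates of $(2+\omega)^{j}\,\overline{(2+\omega)}^{\,2n-j}$ for $0\le j\le 2n$, these are pairwise non-associate because $(2+\omega)/\overline{(2+\omega)}$ has modulus $1$ but is not a root of unity, and every $j\ne n$ yields exactly one associate of the form $x+y\omega$ with $x,y>0$. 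For the Corollary as stated, however, the gcd argument above is the most economical route.
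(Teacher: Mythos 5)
Your construction is correct and is essentially the paper's own argument, unrolled: the paper proves the Corollary by induction, scaling the $n$ solutions for $7^{2n}$ by $7$ and adjoining the coprime pair from Proposition 1, which is exactly your family $7^{k}(A_{2(n-k)},B_{2(n-k)})$. If anything, your version is slightly more careful, since the gcd computation $\gcd(a^{(k)},b^{(k)})=7^{k}$ makes the pairwise distinctness explicit, a point the paper leaves implicit.
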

\begin{proof}
\begin{enumerate}
\item \notag
\item $3^2 + 3 \times 5 + 5^2 = 7^2$. 
\item Assume by induction that $a^2 + ab + b^2 = 7^{2n}$ has $n$ disitnct solutions $\{(a_i, b_i), i = 1,\ldots , n\}$.
\item Then $\{(7a_i, 7b_i)\}$ plus the relatively prime pair $(a,b)$ that exists by proposition 1, give us $n+1$ positive solutions.
\end{enumerate}
\end{proof}
\begin{rem}
The proof also holds  for all primes $p$ such that $p\equiv 1 \bmod 6$. 
\end{rem}
\section{The Second Proof- by Le Tien Nam}
This proof is  elementary, easily inderstood by high school students. 

\noindent We restate the proposition and prove it by induction.
\begin{cl} The equation $a^2+b^2+ab=7^n$ has at least one positive integer solution $(a,b)$ such that $\gcd (a,7)=1$
\end{cl}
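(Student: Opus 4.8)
The plan is to prove the Claim by induction on $n$. For the base case $n=1$, the pair $(a,b)=(1,2)$ satisfies $1^2+2^2+1\cdot 2=7$ and $\gcd(1,7)=1$. For the inductive step, assume $(a,b)$ is a positive solution of $a^2+ab+b^2=7^n$ with $\gcd(a,7)=1$; first I would note that then $7\nmid b$ as well, since $7\mid b$ together with $7\mid(a^2+ab+b^2)$ would force $7\mid a^2$. After swapping $a$ and $b$ if necessary, I may assume $a\geq b$. Reducing the equation modulo $7$ and dividing by $b^2$ (which is invertible mod $7$), the residue $t\equiv ab^{-1}$ satisfies $t^2+t+1\equiv 0\pmod 7$, whose only solutions are $t\equiv 2$ and $t\equiv 4$; so the step splits into the cases $a\equiv 2b\pmod 7$ and $a\equiv 4b\pmod 7$.

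The mechanism is the multiplicativity of the form $x^2+xy+y^2$ (the norm form of $\Z[\omega]$), in the concrete shape of the two polynomial identities $(2x-y)^2+(2x-y)(x+3y)+(x+3y)^2=7(x^2+xy+y^2)$ and $(3x+y)^2+(3x+y)(2y-x)+(2y-x)^2=7(x^2+xy+y^2)$, together with the observation that $(x,y)\mapsto(x+y,-y)$ leaves $x^2+xy+y^2$ unchanged. If $a\equiv 2b\pmod 7$, I would take $(A,B)=(2a-b,\,a+3b)$: then $A\geq 2b-b\geq 1$ and $B\geq 4$, the first identity gives $A^2+AB+B^2=7^{n+1}$, and $A\equiv 3b\not\equiv 0\pmod 7$, so $(A,B)$ is the required solution. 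If $a\equiv 4b\pmod 7$, the pair $(2a-b,a+3b)$ is useless because both of its entries become divisible by $7$ (we would have multiplied by the ``wrong'' prime above $7$); instead I would start from $(3a+b,\,2b-a)$, which is already a positive pair when $a<2b$, and when $a>2b$ I apply the sign-fixing map to replace it by $(2a+3b,\,a-2b)$, again positive (the possibility $a=2b$ does not arise, since it would put us in the previous case). The second identity shows the form value is $7^{n+1}$, and reducing mod $7$ ($3a+b\equiv 6b$, resp.\ $2a+3b\equiv 4b$) shows the leading coordinate is coprime to $7$.

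I expect the main obstacle to be juggling the positivity requirement and the congruence condition simultaneously: the obvious recursion $(a,b)\mapsto(2a-b,a+3b)$ from the first proof preserves neither (see $a_4=-16$ in the table), so the real content is recognizing that $7$ factors into two distinct primes in $\Z[\omega]$, that one must multiply by whichever of them is selected by $a\bmod 7$, and that any resulting negative entry can be absorbed via $(x,y)\mapsto(x+y,-y)$. Once the two cases are set up, everything reduces to a handful of short polynomial identities and congruences; as a sanity check I would verify that the construction reproduces the pairs $(3,5),(18,1),(39,16),\dots$ appearing in the table.
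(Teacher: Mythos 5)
Your proposal is correct and is essentially the paper's own argument: the same induction using the linear substitutions $(2a-b,a+3b)$, $(3a+b,2b-a)$ and $(2a+3b,a-2b)$ (the paper's three pairs up to swapping the roles of $a$ and $b$), each verified by the same identity multiplying $a^2+ab+b^2$ by $7$, with a case split on the residue of $a$ modulo $7$ and on sign to secure positivity and coprimality with $7$. The only difference is organizational: you select the case via $ab^{-1}\equiv 2$ or $4 \pmod 7$, whereas the paper first tries one pair and falls back to the others when $7$ divides its first entry.
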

\begin{proof} 
It is easy to see that for $n=1,2, \; (1,2)\;  and  \;  (3,5)$ solve the equation.\\
Assume that the claim is true for $n$, that is there are relatively prime positive integers $a,b$ satisfying the claim.
We shall prove that the claim is also true for $n+1$.\\
Without loss of generality, we can assume that $a<b$. Consider the following three pairs of numbers:
\begin{itemize}
\item $(c_1,d_1)=(2b-a,3a+b)$
\begin{align}
\Rightarrow c_1^2+d_1^2+c_1d_1 &=(2b-a)^2+(3a+b)^2+(2b-a)(3a+b) \notag \\
&=a^2+4b^2-4ab+9a^2+b^2+6ab-3a^2+2b^2+5ab\notag\\
&=7(a^2+b^2+ab)\notag\\
&=7^{n+1}.\notag
\end{align}
\item $(c_2,d_2)=(b-2a,3a+2b)$
\begin{align}
\Rightarrow c_2^2+d_2^2+c_2d_2 &=(b-2a)^2+(3a+2b)^2+(b-2a)(3a+2b) \notag \\
&=7(a^2+b^2+ab)\notag\\
&=7^{n+1}.\notag
\end{align}
\item $(c_3,d_3)=(2a-b,a+3b)$
\begin{align}
\Rightarrow c_3^2+d_3^2+c_3d_3 &=(2a-b)^2+(a+3b)^2+(2a-b)(a+3b) \notag \\
&=7(a^2+b^2+ab)\notag\\
&=7^{n+1}.\notag
\end{align}
\end{itemize}
By definition we have 
$c_1,d_1,d_2,d_3>0$. Thus,
\begin{itemize}
\item If $\gcd(c_1,7)=1 \Rightarrow (c_1,d_1)$ satisfies the claim for $n+1$.
\item If $\gcd(c_1,7)\ne1 \Rightarrow 7|c_1 \Rightarrow7|2b-a$\\
Note that if $7 | 2a - b$ then $7 | 2b-a + 4a - 2b = 3a$ contradicting the assumption that $gcd(a,7) = 1$, so $\gcd(2a-b,7)=1$
\begin{itemize}
\item If $2a-b<0$, then $(c_2,d_2)$ satisfies the claim for $n+1$.
\item If $2a-b>0$, then $(c_3,d_3)$ satisfies the claim for $n+1$.
\end{itemize}
\end{itemize}
Hence, the claim is true for $n+1$ and by the principle of mathematical induction the claim is true for every positive integer $n$. 
\end{proof}
By a similar argument we can also prove the following generalization: 
\begin{Pro}
For every $r$ such that there exists $(a_0,b_0)$  a pair of relatively prime positive integers such that $r=a_0^2+b_0^2+a_0b_0$ the equation: \[a^2+b^2+ab=r^n\] has at least one positive integer solution $(a_n,b_n)$ such that $\gcd (a_n,b_n)=1$
\end{Pro}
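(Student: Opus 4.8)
The plan is to mimic the elementary induction from Claim~1 verbatim, replacing $7$ with $r$ throughout. The base case $n=1$ is given: $(a_1,b_1)=(a_0,b_0)$ works by hypothesis. For the inductive step, assume relatively prime positive integers $a,b$ with $a^2+ab+b^2=r^n$, and without loss of generality $a<b$. I would introduce the same three auxiliary pairs
\[
(c_1,d_1)=(2b-a,\,a+b),\qquad (c_2,d_2)=(b-2a,\,3a+2b),\qquad (c_3,d_3)=(2a-b,\,a+3b),
\]
and check by direct expansion that each of them satisfies $c_i^2+c_id_i+d_i^2 = 7(a^2+ab+b^2)$. Wait — here is the first thing to be careful about: the factor produced is $7$, not $r$. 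This is the crux, so let me address it head-on.

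The identity $c_i^2+c_id_i+d_i^2=7(a^2+ab+b^2)$ comes from the norm form of $\Z[\omega]$: multiplying $a+b\omega$ by a fixed element of norm $7$ (namely $2+\omega$ or its associates/conjugates) multiplies the norm by $7$. That is special to the prime $7$. To get a factor of $r$ instead, I should multiply $a+b\omega$ by an element of norm $r$, and such an element is exactly $a_0+b_0\omega$ (or one of its six associates, or its conjugate). So the correct auxiliary pairs for the general statement are the components of $(a+b\omega)(a_0+b_0\omega)\,\zeta$ as $\zeta$ ranges over the units $\{\pm1,\pm\omega,\pm\omega^2\}$ and over complex conjugation — concretely, using $\omega^2=\omega-1$,
\[
(a+b\omega)(a_0+b_0\omega)=(aa_0-bb_0)+(ab_0+a_0b+b b_0)\omega,
\]
together with the variants obtained by swapping $(a_0,b_0)\mapsto(b_0,a_0)$ or multiplying by units, which reshuffle signs. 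Each such pair $(c,d)$ satisfies $c^2+cd+d^2=(a^2+ab+b^2)(a_0^2+a_0b_0+b_0^2)=r^{n+1}$ by multiplicativity of the norm $N(xy)=N(x)N(y)$, which is Step 1 and is purely a one-line computation.

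Step 2 is the sign management, exactly as in Claim~1: among the finitely many unit-multiples and conjugates of $(a+b\omega)(a_0+b_0\omega)$, I must produce one whose two coordinates are both strictly positive. The key point, as in the $r=7$ proof, is that if $x=c+d\omega$ has norm $r^{n+1}$ then so do $\bar x$, $\omega x$, $\omega^2 x$, etc., and geometrically these six (or twelve, with conjugate) lattice points are spread around so that at least one lies in the open first ``sextant'' $\{c>0,d>0\}$ in the $(a,b)$-coordinates — the only thing to rule out is a coordinate being zero, which would force $r^{n+1}$ to be a perfect square coprime-free obstruction; but $c=0$ gives $d^2=r^{n+1}$ and $d=0$ gives $c^2=r^{n+1}$, and in either case $\gcd(c,d)=|d|$ or $|c|$ divides $r$, which I will show contradicts coprimality unless $r=1$, a trivial case. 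Step 3 is preserving $\gcd(c,d)=1$: since $N(c+d\omega)=r^{n+1}$ and any common prime $p\mid c,d$ would give $p^2\mid r^{n+1}$ hence $p\mid r$, I need $p\nmid$ one coordinate; this follows because $a+b\omega$ and $a_0+b_0\omega$ are each primitive (coprime coordinates) and $\Z[\omega]$ is a UFD, so their product is divisible only by primes dividing $N(a+b\omega)N(a_0+b_0\omega)$ to controlled powers — more simply, I can argue directly mod $p$ as the authors do for $7$.

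The main obstacle I anticipate is Step 2, the positivity: for $r=7$ the authors could hand-pick three explicit pairs and do a short case analysis on the sign of $2a-b$, because the single generator $2+\omega$ and its conjugate suffice. For general $r$ the generator $a_0+b_0\omega$ is not fixed, so I cannot write down the pairs once and for all; instead I would phrase the argument structurally: the group of units times conjugation acts on the norm-$r^{n+1}$ solutions, this action is free away from the coordinate axes, and the twelve images of any one solution cannot all avoid the open first sextant. Making ``cannot all avoid'' rigorous — rather than gesturing at a picture — is the real work: one clean way is to observe that multiplication by $\omega$ rotates the corresponding point of $\Z[\omega]\subset\mathbb C$ by $60^\circ$, and six successive $60^\circ$ rotations of any nonzero point must land one of them in any fixed open $60^\circ$ sector, in particular the one corresponding to $\{a>0,b>0\}$; the boundary (axis) cases are then dispatched by the coprimality/non-square remarks above. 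Everything else is routine norm algebra that I would not spell out in detail.
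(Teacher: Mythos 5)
Your instinct to replace the paper's explicit pairs by multiplication with a norm-$r$ element is reasonable, but the two places where you defer the work --- the axis case in Step~2 and the coprimality claim in Step~3 --- are precisely where the argument breaks, and they cannot be repaired as stated because the proposition itself is false whenever $3\mid r$. Take $r=3$ with the admissible coprime pair $(a_0,b_0)=(1,1)$. Then $(1+\omega)^2=1+2\omega+\omega^2=3\omega$, a lattice point lying exactly on a coordinate axis with both coordinates divisible by $3$; correspondingly $a^2+ab+b^2=9$ has no positive integer solution at all (test $a=1,2$), coprime or otherwise. This single example refutes both deferred claims: the axis case does not ``contradict coprimality unless $r=1$'' (both factors $1+\omega$ are primitive, yet the product lands on the axis), and ``the product of two primitive elements is primitive because $\Z[\omega]$ is a UFD'' is simply false --- primitivity is not multiplicative. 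The obstruction is the ramified prime $1+\omega$ of norm $3$: if it divides $a_0+b_0\omega$ exactly once, then $3=(1+\omega)^2$ up to a unit divides $(a_0+b_0\omega)^2$, and $r^n$ has no primitive representation for $n\ge 2$. Any correct proof must therefore assume $3\nmid r$, equivalently $a_0\not\equiv b_0 \pmod 3$ --- a hypothesis missing from the statement and never surfaced in your proposal.

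Even granting $3\nmid r$, Step~3 as written fails for a second reason: multiplying the current solution by the fixed element $a_0+b_0\omega$ can destroy coprimality although both factors are primitive; for $r=7$ one has $(1+2\omega)(3+5\omega)=-7+21\omega=7(-1+3\omega)$, while $\gcd(1,2)=\gcd(3,5)=1$. One must choose, prime by prime, between $a_0+b_0\omega$ and its conjugate according to which prime of $\Z[\omega]$ above $p$ divides the current element --- this is exactly what the paper's case split ``$7\mid 2b-a$ forces $7\nmid 2a-b$'' accomplishes, and your ``argue directly mod $p$'' has to be carried out, not waved at. Worse, for composite $r$ your inductive hypothesis (``some coprime positive solution of norm $r^n$'') is too weak: if $r=p_1p_2$ and the current element happens to be $\pi_1^n\bar\pi_2^n$, then multiplying by $a_0+b_0\omega=\pi_1\pi_2$ or by its conjugate both yield an imprimitive product, since conjugation flips all primes at once. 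The clean repair, once $3\nmid r$, is to drop the induction and take $a_n+b_n\omega$ to be $(a_0+b_0\omega)^n$ itself: a rational prime $q$ dividing this power would divide $a_0+b_0\omega$ (for $q$ inert this is immediate; for $q=\pi\bar\pi$ split both $\pi$ and $\bar\pi$ divide the base, hence so does $q$; $q=3$ is excluded), contradicting $\gcd(a_0,b_0)=1$; and a primitive element cannot lie on an axis, since axis points are a unit times an integer $m$ with coordinate gcd $m=r^{n/2}>1$, so your rotation-by-$60^\circ$ argument then legitimately yields a positive coprime pair. Note, for comparison, that the paper gives no proof here beyond ``by a similar argument,'' and its elementary route requires the same $3\nmid r$ restriction and the same conjugate-choice bookkeeping.
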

Step up the work, we have a nice proposition for the equation $x^2+y^2+xy=z^2$.
\begin{Pro}
The set of all positive integer solutions of the equation $x^2+y^2+xy=z^2$ is $M\bigcup N$; in which:
\begin{align}
M=& \Big((b^2-a^2,a^2+2ab,a^2+b^2+ab)\Big|(a,b)\in \N,\gcd(a,b)=1\Big)\notag\\
N=& \left(\left(\dfrac{b^2-a^2}{3},\dfrac{a^2+2ab}{3},\dfrac{a^2+b^2+ab}{3}\right)\bigg|(a,b)\in \N,a=b\text{ mod } 3,\gcd(a,b)=1\right)\notag
\end{align}
\end{Pro}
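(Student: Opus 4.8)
The plan is to pass to the ring of Eisenstein integers $\Z[\omega]$ used in the first proof and run the descent underlying the classical parametrisation of Pythagorean triples. Writing $\alpha=x+y\omega$, the equation $x^2+xy+y^2=z^2$ says exactly $N(\alpha)=z^2$: the family $M$ records the solutions for which $\alpha$ is a perfect square $\gamma^2$ in $\Z[\omega]$, and $N$ those for which one must first split off the prime $\lambda=2\omega-1$ above $3$. So the heart of the matter is: an Eisenstein integer whose norm is a perfect square is, up to a unit, itself a perfect square.

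For the inclusion ``$\supseteq$'' I would just expand. The $M$-part is the identity $N\big((b+a\omega)^2\big)=N(b+a\omega)^2$, i.e. $(b^2-a^2)^2+(b^2-a^2)(a^2+2ab)+(a^2+2ab)^2=(a^2+ab+b^2)^2$, which is the computation already performed in the Claim (using $\omega^2=\omega-1$); the $N$-part is this identity divided by $9$, legitimate because $a\equiv b\bmod 3$ forces $3$ to divide each of $b^2-a^2$, $a^2+2ab$ and $a^2+ab+b^2$.

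For ``$\subseteq$'' I would first reduce to primitive solutions by pulling out $g=\gcd(x,y,z)$. Recall that $\Z[\omega]$ is a UFD with units $\{\pm1,\pm\omega,\pm\omega^2\}$, that a rational prime $p\equiv 2\bmod 3$ stays prime, that $3=-\lambda^2$ with $N(\lambda)=3$, and that a rational prime $p\equiv 1\bmod 3$ splits as $p=\pi\overline\pi$ with $\pi\not\sim\overline\pi$. For a primitive solution, $\alpha=x+y\omega$ satisfies: (i) no inert prime divides $\alpha$, since $p\mid\alpha$ would force $p\mid x,y,z$; (ii) $\lambda\nmid\alpha$ (equivalently $3\nmid z$), since $\overline\lambda$ is an associate of $\lambda$, so the exponent of $\lambda$ in $\alpha$ is even, whence $\lambda\mid\alpha$ would give $3\mid\alpha$ and so $3\mid\gcd(x,y,z)$; (iii) for a split prime $p=\pi\overline\pi$, primitivity forbids $\pi$ and $\overline\pi$ from both dividing $\alpha$, so one of $v_\pi(\alpha),v_{\overline\pi}(\alpha)$ is $0$ and the other equals the even number $v_p(z^2)$. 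Hence every prime divides $\alpha$ to an even power, so $\alpha=\omega^k\beta^2$ for some $\beta$ with coprime coordinates. If $k$ is even then $\alpha$ is a genuine square $\gamma^2$; unwinding $\gamma^2=(c^2-d^2)+(2cd+d^2)\omega$ and replacing $\gamma$ by $\pm\gamma$ to make the coordinates positive and ordered gives $(x,y,z)=(b^2-a^2,a^2+2ab,a^2+ab+b^2)\in M$. If $k$ is odd then $\alpha=-\gamma^2$, and $\delta=\lambda\gamma$ satisfies $\delta^2=3(x+y\omega)$ and $N(\delta)=3z$; writing $\delta=B+A\omega$ with the sign chosen so that $A,B>0$ gives $3x=B^2-A^2$, $3y=A^2+2AB$, $3z=A^2+AB+B^2$, and one checks $\gcd(A,B)=1$ and $A\equiv B\bmod 3$ (the latter because $3\mid A^2+AB+B^2\equiv(A-B)^2$), so $(x,y,z)\in N$.

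The main obstacle is step (iii) together with the two normalisations at the end: keeping straight how the six units interact with the ramified prime $3$ — detecting when $\alpha$ is already a square (so the solution sits in $M$) versus when the factor $\lambda$ has to be peeled off first (so it sits in $N$) — and arranging signs and order so that the output pair comes out coprime, positive, and with $b>a$. A point to keep in mind is that this argument really classifies the \emph{primitive} solutions: because a rational prime $\equiv 2\bmod 3$ is never a norm in $\Z[\omega]$, one cannot simply scale primitive solutions up and stay inside $M\cup N$, so for a general $(x,y,z)$ one applies the classification to $(x/g,y/g,z/g)$. Once the primitive case is pinned down, the remaining work is bookkeeping.
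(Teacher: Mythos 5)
The paper states this Proposition with no proof at all, so there is nothing of the authors' to compare your route against; judged on its own, your Eisenstein-integer descent is the right tool and its core is sound. For $\gcd(x,y,z)=1$ the valuation argument works exactly as you say: inert primes and the ramified prime $\lambda=2\omega-1$ cannot divide $\alpha=x+y\omega$, and for a split prime $p=\pi\overline{\pi}$ exactly one of $\pi,\overline{\pi}$ divides $\alpha$, to the even exponent $2v_p(z)$, so $\alpha=\omega^k\beta^2$; since $-1=\omega^3=\omega\cdot\omega^2$ differs from $\omega$ by a unit square, your two cases $\alpha=\gamma^2$ and $\alpha=-\gamma^2$ are exhaustive. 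The identities $\lambda^2=-3$, $(c+d\omega)^2=(c^2-d^2)+(2cd+d^2)\omega$, the congruence $A\equiv B\pmod 3$ from $3\mid A^2+AB+B^2$, the coprimality of the produced pair, and the sign normalisation forcing $b>a>0$ all check out, and the inclusion of $M\cup N$ in the solution set is the routine expansion you indicate. So your sketch genuinely proves: every positive solution with $\gcd(x,y,z)=1$ lies in $M\cup N$, and every element of $M\cup N$ is a solution.

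The genuine gap is precisely the step you defer to ``bookkeeping'': passing from primitive to general solutions. Applying the classification to $(x/g,y/g,z/g)$ says nothing about membership of $(x,y,z)$ itself in $M\cup N$, and in fact no argument can close this gap because the Proposition as printed is false for imprimitive triples: $(6,10,14)=2\cdot(3,5,7)$ solves $x^2+xy+y^2=z^2$, but membership in $M$ or $N$ would force $a^2+ab+b^2$ to equal $14$ or $42$, values the form never takes (the prime $2\equiv 2\bmod 3$ occurs to an odd power); similarly $(21,35,49)=7\cdot(3,5,7)$ is excluded since the only coprime representations of $49$ and $147$ yield $(16,39,49)$ and $(39,16,49)$, and even $(9,15,21)=3\cdot(3,5,7)$, whose gcd is $3$, lies outside $M\cup N$. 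You should therefore not present your argument as a proof of the statement as written; state and prove the corrected version it actually establishes --- the solutions with $\gcd(x,y,z)=1$ all lie in $M\cup N$ (the $a\not\equiv b\pmod 3$ part of $M$ together with $N$), every element of $M\cup N$ is a solution, and a general solution is an integer multiple of a primitive one --- and note explicitly that Proposition 3 itself requires this amendment.
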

These results are helpful in constructing integer distance geometric graphs.  We provide here two examples. See \textit{"Constructing integer distance graphs"} for more details.

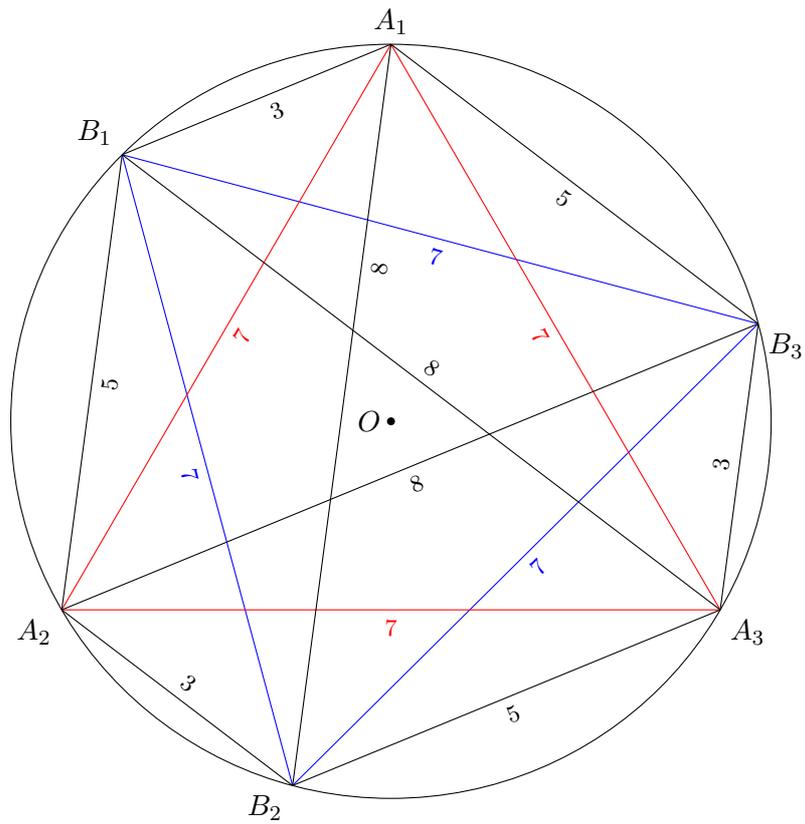
\begin{figure}
\centering

\begin{tikzpicture}
\def\R{5} % radius of circumcircle
\pgfmathsetmacro{\r}{\R/2} % radius of incircle
\path
(135:\R) coordinate (B1) node[above left]{$B_1$}
(90:\R) coordinate (A1) node[above]{$A_1$}

(255:\R) coordinate (B2) node[below left]{$B_2$}
(210:\R) coordinate (A2) node[below left]{$A_2$}

(15:\R) coordinate (B3) node[below right]{$B_3$}
(-30:\R) coordinate (A3) node[below right]{$A_3$};

% draw the incribed circle
\draw (0,0) circle(\R);

%draw triangle B1B2B3

\draw [blue] (B1)--(B2) node[midway, below, sloped, pos=0.5] {\footnotesize $7$ };
\draw [blue] (B2)--(B3) node[midway, below, sloped, pos=0.5] {\footnotesize $7$ };
\draw [blue] (B3)--(B1) node[midway, below, sloped, pos=0.5] {\footnotesize $7$ };

%draw triangle A1A2A3
\draw [red](A1)--(A2) node[midway, below, sloped, pos=0.5] {\footnotesize $7$};
\draw [red] (A2)--(A3) node[midway, below, sloped, pos=0.5] {\footnotesize $7$};
\draw [red] (A3)--(A1) node[midway, below, sloped, pos=0.5] {\footnotesize $7$};

\draw [-] (B1) -- (A1) node[midway, below, sloped, pos=0.55] {\footnotesize $3$};
\draw [-] (B1) -- (A2) node[midway,sloped, below] {\footnotesize $5$};

\draw [-] (B2) -- (A2) node[midway, sloped, above, pos=0.5] {\footnotesize $3$};
\draw [-] (B2) -- (A3) node[midway, below, sloped] {\footnotesize $5$};

\draw [-] (B3) -- (A3) node[midway, sloped, above, pos=0.5] {\footnotesize $3$};
\draw [-] (B3) -- (A1) node[midway, below, sloped] {\footnotesize $5$};

\draw (A1)--(B2) node[midway, below, sloped, pos=0.3] {\footnotesize $8$};
\draw (A2)--(B3) node[midway, below, sloped, pos=0.5] {\footnotesize $8$};
\draw (A3)--(B1) node[midway, above, sloped, pos=0.5] {\footnotesize $8$};

\fill[] (0,0) circle(1.5pt) node[left] {$O$};

\end{tikzpicture}

\caption{Embedding $K_{2,2,2}$ and the 5-wheel in the integral $\R^2$. The $5$-cycle $A_1B_1A_2A_3B_3$ is a pentagon with odd length edges and $B_2$ is connected by odd length edges to all of them, except to $A_1$ where $|B_2A_1|=8$ by Ptolemy's theorem.} \label{fig:M1}

\end{figure}

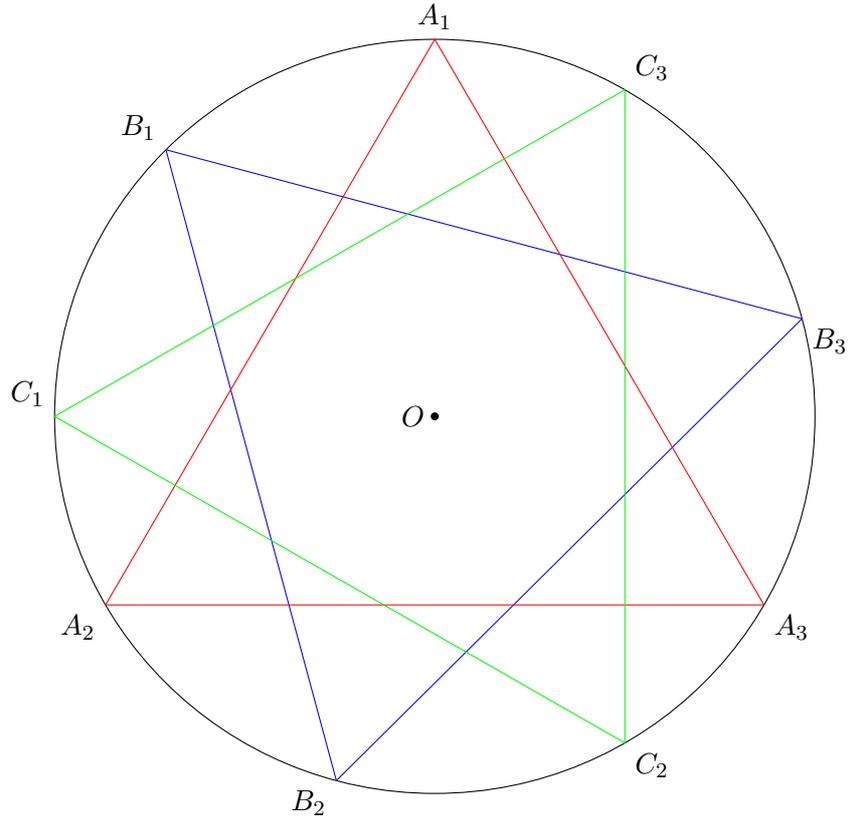
\begin{figure}
\centering
\begin{tikzpicture}
\def\R{5} % radius of circumcircle
\pgfmathsetmacro{\r}{\R/2} % radius of incircle
\path
(135:\R) coordinate (B1) node[above left]{$B_1$}
(90:\R) coordinate (A1) node[above]{$A_1$}

(255:\R) coordinate (B2) node[below left]{$B_2$}
(210:\R) coordinate (A2) node[below left]{$A_2$}

(15:\R) coordinate (B3) node[below right]{$B_3$}
(-30:\R) coordinate (A3) node[below right]{$A_3$}
(60:\R) coordinate (C3) node[above right]{$C_3$}
(180:\R) coordinate (C1) node[above left]{$C_1$}
(-60:\R) coordinate (C2) node[below right]{$C_2$}
;

% draw the incribed circle
\draw (0,0) circle(\R);

%draw triangle B1B2B3

\draw [blue] (B1)--(B2) node[midway, below, sloped, pos=0.5]{};
\draw [blue] (B2)--(B3) node[midway, below, sloped, pos=0.5]{};
\draw [blue] (B3)--(B1) node[midway, below, sloped, pos=0.5]{};

%draw triangle A1A2A3
\draw [red](A1)--(A2) node[midway, below, sloped, pos=0.5]{}; %{\footnotesize $7$ cm};
\draw [red](A2)--(A3) node[midway, below, sloped, pos=0.5]{}; %{\footnotesize $7$ cm};
\draw [red](A3)--(A1) node[midway, below, sloped, pos=0.5]{};

% draw Y1Y2Y3
\draw [green] (C1)--(C2) node[midway, below, sloped, pos=0.5]{};
\draw [green] (C2)--(C3) node[midway, below, sloped, pos=0.5]{};
\draw [green] (C3)--(C1) node[midway, below, sloped, pos=0.5]{};

\fill[] (0,0) circle(1.5pt) node[left] {$O$};

\end{tikzpicture}
\caption{Embedding $K_{3,3,3}$ and the 5 and 7-wheels in the integral $\R^2$. The red and blue triangle are the blown of the first picture by a factor of $7$. The green circle is constructed by the requirement that $A1C_3=A_3C_2=A_2C_1=16, C_3A_3=A_2C_2=A_1C_1=39. $ By Ptolemy's theorem we can see that $B_3C_3=B_2C_2=B_1C_1=21$. Additionally, $A_1B_2=A_2B_3=A_3B_1=56$,  $B_2C_1=B_1C_3=B_3C_2=35$, and $B_1C_2=B_2C_3=C_1B_3=55.$} \label{fig:M1}

\end{figure}


\begin{thebibliography}{9}

\bibitem{}
{Jurgen Neukirch, Algebraic number theory, 
Translated from the 1992 German original and with a note by Norbert Schappacher. With a foreword by G. Harder. Grundlehren der Mathematischen Wissenschaften, 322. Springer-Verlag, Berlin, 1999.}
\bibitem{}
{Olof Hanner, Construction of Balanced Howell Rotations for $2(p^r+1)$ partnerships, Journal of Combinatorial Theory, series A 33.}
    
\bibitem{}
{Tien-Nam Le, Fiachra Knox, Moshe Rosenfeld, Every 3-colorable graph has a faithful representation in the odd-distance graph, Discrete Mathematics
Volume 338, Issue 9, 6 September 2015, Pages 1614-1620.}



\end{thebibliography}
\end{document}